\documentclass[letterpaper, 10 pt, conference]{ieeeconf}  % Comment this line out
                                                          % if you need a4paper
%\documentclass[a4paper, 10pt, conference]{ieeeconf}      % Use this line for a4
                                                          % paper

\IEEEoverridecommandlockouts                              % This command is only
\overrideIEEEmargins

\usepackage{graphicx}
\usepackage{amsmath}
\usepackage{amssymb}
\usepackage{CJK}
\usepackage{color}
\usepackage{subfigure}
\usepackage{dsfont}
\usepackage{subfigure}
\usepackage{booktabs}
\usepackage{microtype}
\usepackage{mathrsfs}
\usepackage{bm}
\usepackage{doi}
\usepackage{hyperref}
\usepackage{cite}
\usepackage{url}

\newtheorem{theorem}{Theorem}[section]

\newtheorem{definition}{Definition}
\newtheorem{assumption}{Assumption}

\newtheorem{lemma}[theorem]{Lemma}
\usepackage{algorithm,algorithmicx,algpseudocode}
\allowdisplaybreaks[4]

\definecolor{steelblue}{RGB}{70,130,180}
\definecolor{warmred}{RGB}{244, 104, 65}

\title{Solving Optimal Power Flow for Distribution Networks with State Estimation Feedback}

\author{Yi Guo, Xinyang Zhou, Changhong Zhao, Yue Chen, Tyler Summers, and Lijun Chen
%\thanks{This material will be submitted to the invited section at 2020 Annual American Control Conference (ACC)}
%\thanks{This material is based on work supported by \textcolor{steelblue}{@TODO: sponsor?}}
%\thanks{\emph{(Corresponding author: Xinyang Zhou)}}
\thanks{This work was authored in part by the National Renewable Energy Laboratory, operated by Alliance for Sustainable Energy, LLC, for the U.S. Department of Energy (DOE) under Contract No. DE-EE-0007998. Funding provided by U.S. Department of Energy Office of Energy Efficiency and Renewable Energy Solar Energy Technologies Office. The views expressed in the article do not necessarily represent the views of the DOE or the U.S. Government. The U.S. Government retains and the publisher, by accepting the article for publication, acknowledges that the U.S. Government retains a nonexclusive, paid-up, irrevocable, worldwide license to
publish or reproduce the published form of this work, or allow others to do so, for U.S. Government purposes. This matertial is also based on work supported by the National Science Foundation under grant CMMI-1728605.}
\thanks{Y. Guo and T. Summers are with the Department
of Mechanical Engineering, The University of Texas at Dallas, Richardson,
TX, USA, email: \{yi.guo2, tyler.summers\}@utdallas.edu.}
\thanks{X. Zhou and Y. Chen are with Department of Power System Engineering, National Renewable Energy Laboratory, Golden, CO, 80401, USA, email: \{xinyang.zhou, yue.chen\}@nrel.gov.}
\thanks{C. Zhao is with Department of Information Engineering, The Chinese University of Hong Kong, HK, email: chzhao@ie.cuhk.edu.hk.}
\thanks{L. Chen is with College of Engineering and Applied Science, The University of Colorado, Boulder, CO, 80309, USA, email: lijun.chen@colorado.edu.}
}
\begin{document}
\maketitle
\thispagestyle{empty}
\pagestyle{empty}

\begin{abstract}
Conventional optimal power flow (OPF) solvers assume full observability of the involved system states. However in practice, there is a lack of reliable system monitoring devices in the distribution networks. To close the gap between the theoretic algorithm design and practical implementation, this work proposes to solve the OPF problems based on the state estimation (SE) feedback for the distribution networks where only a part of the involved system states are physically measured. 
The SE feedback increases the observability of the under-measured system and provides more accurate system states monitoring when the measurements are noisy. We analytically investigate the convergence of the proposed algorithm. The numerical results demonstrate that the proposed approach is more robust to large pseudo measurement variability and inherent sensor noise in comparison to the other frameworks without SE feedback.
\end{abstract}

\section{Introduction}

% background and motivation

%The  of distributed energy resources (DERs) in distribution systems has turned the traditional passive distribution networks into active ones. 

Distribution networks are being instrumented with new technologies to replace traditional one-way distribution management systems and provide more flexibility to better accommodate a large portion of distributed energy resources (DERs). As the incentive-based control strategies dominate
the electricity markets, many of the customers in distribution networks become active and motivated end-users to optimize their own power usage. System operators must deal with significant variability from renewable energy resources and heterogeneous customer behaviors. In order to achieve an accurate and reliable real-time distribution network operation, future distribution networks will require a more sophisticated control scheme for an efficient and safe operation.

For this purpose, optimal power flow (OPF) problems are formulated to determine optimal policies for controllable devices to optimize various objectives and subject to the constraints at network and device levels. Many issues in distribution networks have been proposed and well studied recently through mathematical analysis and computational techniques \cite{mohsenian2010autonomous,callaway2010achieving,camacho2011control,primadianto2016review,ahmad2018distribution,maharjan2013dependable,bernstein2018load,zhou2017incentive,dall2016optimal,summers2015stochastic,zhang2018joint,chiang1990existence,piagi2006autonomous,guo2018data1,guo2018data2,palensky2011demand,dehghanpour2019game,liu2012trade,dehghanpour2018survey,gomez2004power,wu1990power,colombino2019online,zamzam2019data,zhou2019accelerated}. The flexibility of controllable devices, including distributed renewable energy resources (RESs) \cite{guo2018data2}, microgrids\cite{piagi2006autonomous}, electric vehicles \cite{zhang2018joint} and demand responsive loads \cite{mohsenian2010autonomous,callaway2010achieving,palensky2011demand}, can be used to promote the network performance (e.g., stability, reliability, efficiency and social-welfare \cite{zhou2017incentive,maharjan2013dependable}), which offer an optimal operation schedule for these devices based on their prescribed objective functions. 

In OPF problems for distribution networks, most frameworks assume complete availability of network states to compute controllable device online updates. However, in practice such assumptions are unjustifiable due to increasingly complex, extremely large-scale distribution networks with nonlinear time-varying nature, limited communication bendwidth, etc. To solve these issues, power system state estimators have been utilized through the Supervisory Control and Data Acquisition (SCADA), phasor measurement, topology processor and pseudo measurement to obtain a clearer picture of network states \cite{primadianto2016review, ahmad2018distribution,dehghanpour2019game,dehghanpour2018survey,liu2012trade,zamzam2019data}, which enhances the accuracy of system operations. Overall, all these directions have explored efficient control or monitoring methods for optimal power flows and state estimation problems, respectively. In fact, none of the existing OPF works explicitly take advantage of the state estimation information for an optimal schedule with limited knowledge of distribution networks. Even with sophisticated modeling and programming techniques, lack of timely update information within the system control phase can cause possible infeasibility or system collapse.

In this work, we first formulate a general convex OPF problem subject to linearized power flow equations and network-wise coupling constraints. We then apply well-established model-based feedback method to approximately solve the OPF problem through primal-dual gradient algorithm with measurement feedback from nonlinear power flow. In this way, we reduce modeling errors introduced by power flow linearization, while keep the algorithm scalable and computationally tractable \cite{dall2016optimal,colombino2019online,zhou2017incentive}. However, in practice there may be only a limited number of measuring devices deployed in distribution networks, rendering such measurement-based feedback unrealistic. To bridge the gap between the theoretical studies of existing OPF problems and further their practical implementation on distribution networks,
we therefore replace the measurement-based feedback with an estimated one based on the results of solving a well-formulated state estimation problem under appropriate assumptions that guarantee full observability \cite{gomez2004power,wu1990power}. Convergence of the gradient algorithm with state estimation feedback is analytically established and  numerically corroborated. Numerical results based on an optimal voltage regulation problem also illustrate that SE-based feedback achieves better voltage profile than measurement-based feedback by increasing observability of the network.

The rest of this paper is organized as follows. Section~\ref{sec:model} models the distribution systems. Section~\ref{sec:opf} formulates an OPF problem and introduces gradient algorithm with measurement feedback for solving it. Section~\ref{sec:se} formulates state estimation problem and design a realistic algorithm to add onto the original measurement feedback. Section~\ref{sec:num} demonstrates numerical results and Section~\ref{sec:con} concludes this paper.

% \textbf{Notation:} We use $(\cdot)^\intercal$ to denote vector and matrix transpose. Superscript ``$~\hat{\cdot}~$" is reserved for objects that are from the state estimation. Superscript ``$~\tilde{\cdot}~$" is reserved for objects that are from actual system measurement. $|\cdot|$ denotes the cardinality of a set or an absolute value of a complex number. The projection of object $\mathbf{x}$ onto set $\mathcal{Z}$ is indicated as $[\mathbf{x}]_{\mathcal{Z}}$.
 
\section{System Modeling}\label{sec:model}
Consider a distribution network denoted by a directed and connected graph $(\mathcal{N}_0,\mathcal{E})$, where $\mathcal{N}_0:= \mathcal{N}\cup\{0\}$ is a set of all ``buses" or ``nodes" with substation node 0 and $\mathcal{N}:= \{1,\dots,N\}$, and $\mathcal{E} \subset \mathcal{N}\times\mathcal{N}$ is a set of ``links" or ``lines" for all $(i,j) \in \mathcal{E}$. Let $V_i :=|V_i|e^{j\angle V_i} \in \mathbf{C}$ and $I_i:= |I_i|e^{j\angle I_i}\in \mathbf{C}$ denote the phasor for the line-to-ground voltage and the current injection at node $i\in\mathcal{N}$. The absolute values $|V_i|$ and $|I_i|$ denote the signal root-mean-square values and the phase $\angle V_i$ and $\angle I_i$ corresponding to the phase angle with respect to the global reference. We collect these variables into complex vectors $\mathbf{v}:=[V_1,V_2,\dots,V_N]^\intercal \in\mathbf{C}^N$ and $\mathbf{i}:=[I_1,I_2,\dots,I_N]^\intercal \in \mathbf{C}^N$. We denote the complex admittance of line $(i,j)\in\mathcal{E}$ by $y_{ij} \in \mathbf{C}$. The admittance matrix $\mathbf{Y}\in \mathbf{C}^{N\times N}$ is given by
\begin{equation}\label{admittanceMatrix}
Y_{ij} = \left\{ \begin{array}{ll}
\sum_{l \sim i} y_{il} + y_{ii}, & \textrm{if $i=j$}\\
-y_{ij}, & (i,j) \in \mathcal{E} \\
0, &  (i,j) \notin \mathcal{E}
\end{array} \right .,
\end{equation}
where $l \sim i$ indicates the connection between node $l$ and node $i$.

Node 0 is modeled as a slack bus. The other nodes are modeled as PQ buses for which the injected complex power are specified. The admittance matrix can be partitioned as
\begin{equation}\nonumber
\begin{bmatrix}
I_0^t\\
\mathbf{i}^t
\end{bmatrix} =
\begin{bmatrix}
y_{00} & \bar{y}^\intercal  \\
\bar{y} & \mathbf{Y}
\end{bmatrix}
\begin{bmatrix}
V_0\\
\mathbf{v}^t
\end{bmatrix}.
\end{equation}
The net complex power injection is then
\begin{equation}
\mathbf{s} = \textrm{diag}(\mathbf{v})\Big(\mathbf{Y}^*(\mathbf{v})^* + \bar{y}^*(\mathbf{v}_0)^* \Big).\nonumber
\end{equation}

We define a vector $\mathbf{r} \in \mathbf{R}^M$ for certain (combined) electrical quantities of interests (e.g., voltage magnitudes, current injections, power injection at the substation, etc.) as a function of nodal power injections $\mathbf{p},\mathbf{q}$:
\begin{equation}
\label{powerflow}
    \mathbf{r} = f(\mathbf{p},\mathbf{q})
\end{equation}
with $f(\cdot)$ representing the nonlinear power flow and all node injections collected in a vector compact form of $\mathbf{p} :=[p_1,\dots,p_N]^\intercal$ and $\mathbf{q} :=[q_1,\dots,q_N]^\intercal$. 

Due to the convexity and computation concerns, here we leverage the linearization of \eqref{powerflow} as follows: 
\begin{equation}
\label{linear_powerflow}
    \mathbf{r} = \mathbf{A}\mathbf{p} + \mathbf{B}\mathbf{q} + \mathbf{r}_0,
\end{equation}
where the parameters $\mathbf{A}$, $\mathbf{B}$ and $\mathbf{r}_0$ can be attained from various linearization methods, e.g., \cite{guggilam2016scalable, bernstein2017linear,gan2016online}.
% We omit the details of developing the linear models for these electrical quantities.  

For simplicity, the network is considered as symmetric in steady state, where all currents and voltages are sinusoidal signals at the same frequency. Nevertheless, the presented model and results can be readily extended to unbalanced multi-phase systems. 

\section{Solving OPF with Measurement Feedback}\label{sec:opf}
In this section, we introduce a general OPF problem and the pertinent gradient algorithm with measurement feedback from nonlinear power flow to reduce modeling errors. %Motivated by the limited measurement devices, we next  
%and then follow a weighted least square (WLS) state estimation for distribution networks, and we specify that the estimation of the real-time system dynamics can then be exploited in the OPF control algorithm to improve the overall control performance of the network in the presence of fast changing dynamics and inherent errors of measurement.

\subsection{OPF Formulation}
Consider the following OPF problem for distribution networks
\begin{subequations}\label{eq:opt}
\begin{eqnarray}
\textbf{P1-OPF:} & \underset{\mathbf{p},\mathbf{q}}{\min} & \sum_{i\in\mathcal{N}}C_i(p_i,q_i)+ C_0(\mathbf{p},\mathbf{q}),\\
% & &\mathbf{r}=\mathbf{A}\mathbf{p}+\mathbf{B}\mathbf{q}+ \mathbf{r}_0,\\
&\text{s.t.}&\mathbf{g}(\mathbf{r}(\mathbf{p},\mathbf{q})) \leq 0, \label{eq:voltreg}\\
&& (p_i,q_i)\in\mathcal{Z}_i,\forall i\in\mathcal{N},\label{eq:X} 
\end{eqnarray}
\end{subequations}
where $\mathbf{r}(\mathbf{p},\mathbf{q})$ represents the linearized relationship~\eqref{linear_powerflow}, $C_0(\mathbf{p},\mathbf{q})$ is cost function capturing system-wise objectives (e.g., cost of deviation of total power injections into the substation from nominal values), associated with the local objective function $C_i(p_i,q_i)$ that captures the generation costs, ramping costs, active power losses, renewable curtailment penalty, auxiliary service expenses and reactive compensation (comprising a weighted sum thereof) at node $i \in \mathcal{N}$. The general inequality $\mathbf{g}(\cdot)$ as functions of nodal injections $(\mathbf{p},\mathbf{q})$ and interested electrical quantity $\mathbf{r}$ describe overall network coupling and inequality constraints, such as power flow, voltage magnitude bounds. We have the following assumption
\begin{assumption}\label{assumption_1}
A set of local objective functions $C_i(p_i,q_i), \forall i\in \mathcal{N}$ are continuously differentiable and strongly convex functions of $(p_i,q_i)$, and their first-order derivatives are bounded within their operation regions indicated as $(p_i,q_i) \in \mathcal{Z}_i, \forall i\in \mathcal{N}$. The system-wise objective function $C_0(\mathbf{p},\mathbf{q})$ is continuously differentiable and convex with its first-order derivative bounded. The function $\mathbf{g}(\cdot)$ is convex with bounded derivatives.
\end{assumption}

We constrain our electrical quantity vector $\mathbf{r}$ through a prescribed continuously differentiable and convex function $\mathbf{g}(\cdot)$ with bounded derivatives. The node injections are subject to the convex and compact sets defined as $\mathcal{Z}_i:=\{ (p_i,q_i) | \underline{p}_i\leq p_i \leq \overline{p}_i,\ \underline{q}_i\leq q_i \leq \overline{q}_i\}$. Note that the feasible regions might depend on the inherent terminal properties of various dispatchable devices, e.g., inverter-based distributed generators, energy storage systems or small-scale diesel generators, such that the active and reactive power $(p_i, q_i)$ are additionally subjected to the apparent power limitation or active power availability. For the simplification and generalization, we put a box constraint on active and reactive power. In general, this constraint can be specified to each individual device's operation region. We assume strict feasibility of problem \eqref{eq:opt}.

\begin{assumption}[Slater's Condition] There exist a strictly feasible point within the operation region $(\bar{\mathbf{p}},\bar{\mathbf{q}}) \in \mathcal{Z}$, where $\mathcal{Z}:=\mathcal{Z}_1 \times,\dots,\times \mathcal{Z}_N$, so that
\begin{equation}
    \mathbf{g}(\mathbf{r}(\mathbf{\bar{p}},\mathbf{\bar{q}})) < 0.\nonumber
\end{equation}
\end{assumption}

\subsection{Primal-Dual Gradient Algorithm}
The regularized Lagrangian
$\mathcal{L}$ for \eqref{eq:opt} is
\begin{equation}\label{eq:L_opf}
\mathcal{L}  =  \sum_{i\in\mathcal{N}}C_i(p_i,q_i) + C_0(\mathbf{p},\mathbf{q}) + \bm{\mu}^\intercal \mathbf{g}(\mathbf{r}(\mathbf{p},\mathbf{q})) - \frac{\eta}{2}\|\bm{\mu}\|_2^2,
\end{equation}
where $\bm{\mu}$ is the dual variable vector for the inequality constraints. To promote a provable convergence property, the Lagrangian \eqref{eq:L_opf} includes a Tikhonov regularization term $-\frac{\eta}{2}\|\bm{\mu}\|_2^2$ with a prescribed small parameter $\eta$ that introduces bounded discrepancy \cite{simonetto2014double,koshal2011multiuser}. Then we come to the following saddle-point problem
\begin{equation}\label{eq:maxmin_L}
    \max_{\bm{\mu}\in\mathbf{R}_{+}} \min_{(\mathbf{p},\mathbf{q})\in\mathcal{Z}} \mathcal{L} \left(\mathbf{p}, \mathbf{q},\bm{\mu} \right),
\end{equation}
and an iterative primal-dual gradient algorithm to reach the unique saddle-point of \eqref{eq:maxmin_L}
\begin{subequations}\label{eq:primaldual}
\begin{eqnarray}
& \mathbf{p}^{k+1}&=\left[\mathbf{p}^k -\epsilon \nabla_{\mathbf{p}}\mathcal{L}|_{\mathbf{p}^k,\mathbf{q}^k,\bm{\mu}^k}\right]_{\mathcal{Z}},\label{eq:gradient_opf_p}\\
& \mathbf{q}^{k+1}&=\left[\mathbf{q}^k -\epsilon \nabla_{\mathbf{q}}\mathcal{L}|_{\mathbf{p}^k,\mathbf{q}^k,\bm{\mu}^k}\right]_{\mathcal{Z}},\label{eq:gradient_opf_q}\\
& \bm{\mu}^{k+1}&=\left[\bm{\mu}^k + \epsilon \nabla_{\bm{\mu}}\mathcal{L}|_{\mathbf{r}^k}\right]_{\mathbf{R}_+},\label{eq:gradient_opf_dual}\\
& \mathbf{r}^{k+1}&=\mathbf{A}\mathbf{p}^{k+1} + \mathbf{B}\mathbf{q}^{k+1} + \mathbf{r}_0,\label{eq:gradient_opf_pf}
\end{eqnarray}
\end{subequations}
where $\epsilon \in \mathbf{R}_{++}$ is a constant stepsize to be determined, and the operators $[\cdot]_{\mathcal{Z}}$ and $[\cdot]_{\mathbf{R}_+}$ project objects onto the feasible set $\mathcal{Z}:= \times_{i\in\mathcal{N}}\mathcal{Z}_i$ and non-negative orthant, respectively. For notation convenience, we compact the gradient operations in \eqref{eq:primaldual} as an operator $\mathcal{T}$ given by
\begin{equation}\label{eq:linear_operator}
   \mathcal{T}\left(
    \begin{bmatrix}
    \mathbf{p}^{k}\\
    \mathbf{q}^{k}\\
    \bm{\mu}^{k}
    \end{bmatrix}
    \right) :=
    \begin{bmatrix}
    \epsilon \nabla_{\mathbf{p}}\mathcal{L}|_{\mathbf{p}^k,\mathbf{q}^k,\bm{\mu}^k} \\
    \epsilon \nabla_{\mathbf{q}}\mathcal{L}|_{\mathbf{p}^k,\mathbf{q}^k,\bm{\mu}^k}\\
    - \epsilon \nabla_{\mathbf{\bm{\mu}}}\mathcal{L}|_{\mathbf{p}^k,\mathbf{q}^k,\bm{\mu}^k}
    \end{bmatrix},
\end{equation}
to recast \eqref{eq:primaldual} as 
\begin{equation}
    \begin{bmatrix}
    \mathbf{p}^{k+1}\\
    \mathbf{q}^{k+1}\\
    \bm{\mu}^{k+1}
    \end{bmatrix}  =  \left[\begin{bmatrix}
    \mathbf{p}^{k}\\
    \mathbf{q}^{k}\\
    \bm{\mu}^{k}
    \end{bmatrix} - \mathcal{T}\left(
    \begin{bmatrix}
    \mathbf{p}^{k}\\
    \mathbf{q}^{k}\\
    \bm{\mu}^{k}
    \end{bmatrix}
    \right)\right]_{\mathcal{Z}\times \mathbf{R}_{+}}.
\end{equation}
We define a compact vector $\mathbf{x}:=[(\mathbf{p}^k)^\intercal,(\mathbf{q}^k)^\intercal,({\bm{\mu}}^k)^\intercal]^\intercal$ for notation convenience. Under Assumption \ref{assumption_1}, for any feasible points $\mathbf{x}_1$ and $\mathbf{x}_2$, we can show that $\mathcal{T}$ is $L$-Lipschitz continuous and $M$-strongly monotone with constants $L, M >0$, and satisfies the following inequalities
\begin{equation}\label{eq:property_T_1}
    \|\mathcal{T}(\mathbf{x}_1) - \mathcal{T}(\mathbf{x}_2) \|_2^2 \leq L^2 \|\mathbf{x}_1 - \mathbf{x}_2\|_2^2,
\end{equation}
\begin{equation}\label{eq:property_T_2}
\left(\mathcal{T}(\mathbf{x}_1) - \mathcal{T}(\mathbf{x}_2)\right)^\intercal \left(\mathbf{x}_1 -
\mathbf{x}_2\right) \geq M\|\mathbf{x}_1 - \mathbf{x}_2\|_2^2.
\end{equation}
We can obtain the following convergence condition for \eqref{eq:primaldual}.
\begin{theorem} \label{theorem_1}
If the stepsize $\epsilon$ satisfies
%There exist the upper bounds for $0< \epsilon \leq \bar{\epsilon}$, which is bounded and small enough
\begin{equation}
0 < \epsilon < 2M/L^2,
\end{equation}
algorithm \eqref{eq:primaldual} converges to the unique saddle point of \eqref{eq:L_opf}. 
\end{theorem}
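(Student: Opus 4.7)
The plan is to recognize the iteration in \eqref{eq:primaldual} as a projected fixed-point iteration on the KKT operator and to show that, under the stated stepsize condition, this map is a strict contraction in the Euclidean norm. Concretely, let $F$ denote the gradient operator without the stepsize (so that $\mathcal{T} = \epsilon F$ in the sense of \eqref{eq:linear_operator}, with the $\bm{\mu}$-component sign-flipped) and let $\Pi$ denote Euclidean projection onto the closed convex set $\mathcal{Z}\times\mathbf{R}_+$. Then \eqref{eq:primaldual} is $\mathbf{x}^{k+1}=\Pi(\mathbf{x}^k-\epsilon F(\mathbf{x}^k))$. The first step is to argue that a saddle point $\mathbf{x}^\star$ of $\mathcal{L}$ exists and is unique: strong convexity of the $C_i$'s in the primal block together with the Tikhonov term $-\tfrac{\eta}{2}\|\bm{\mu}\|_2^2$ makes $\mathcal{L}$ strongly convex--strongly concave on $\mathcal{Z}\times\mathbf{R}_+$, so by Slater's condition a unique saddle point exists, and by the KKT conditions it is characterized by the variational inequality $\mathbf{x}^\star=\Pi(\mathbf{x}^\star-\epsilon F(\mathbf{x}^\star))$.

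Next I would use the nonexpansiveness of $\Pi$ onto the convex set $\mathcal{Z}\times\mathbf{R}_+$ to obtain
\begin{equation}\nonumber
\|\mathbf{x}^{k+1}-\mathbf{x}^\star\|_2^2 \le \|(\mathbf{x}^k-\mathbf{x}^\star)-\epsilon\bigl(F(\mathbf{x}^k)-F(\mathbf{x}^\star)\bigr)\|_2^2,
\end{equation}
and then expand the right-hand side, applying \eqref{eq:property_T_2} to the cross term and \eqref{eq:property_T_1} to the quadratic term. This yields
\begin{equation}\nonumber
\|\mathbf{x}^{k+1}-\mathbf{x}^\star\|_2^2 \le \bigl(1-2\epsilon M+\epsilon^2 L^2\bigr)\|\mathbf{x}^k-\mathbf{x}^\star\|_2^2.
\end{equation}

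The third step is to observe that the contraction factor $\rho(\epsilon):=1-2\epsilon M+\epsilon^2 L^2$ is a convex quadratic in $\epsilon$ that equals $1$ at $\epsilon=0$ and at $\epsilon=2M/L^2$, so $\rho(\epsilon)<1$ precisely on the open interval $\bigl(0,2M/L^2\bigr)$. On this interval the iteration is a Banach contraction with rate $\sqrt{\rho(\epsilon)}<1$, and iterating the bound gives $\|\mathbf{x}^k-\mathbf{x}^\star\|_2\to 0$ geometrically, proving convergence to the unique saddle point.

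The main obstacle, and the part worth stating carefully rather than treating as routine, is the justification that the composite operator $F$ actually satisfies \eqref{eq:property_T_1}--\eqref{eq:property_T_2} with explicit $L,M>0$: strong monotonicity of the primal block comes from strong convexity of $\sum_i C_i+C_0$, while monotonicity in $\bm{\mu}$ relies entirely on the Tikhonov regularization (without $\eta>0$ the operator would only be monotone, not strongly monotone, and the contraction argument above would fail). Lipschitz continuity in turn needs the boundedness of $\nabla C_i$, $\nabla C_0$, and $\nabla\mathbf{g}$ on the compact set $\mathcal{Z}$ and the linearity of $\mathbf{r}(\mathbf{p},\mathbf{q})$ in \eqref{linear_powerflow}; these are exactly the hypotheses collected in Assumption \ref{assumption_1}. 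Once $L$ and $M$ are in hand the convergence conclusion follows from the three-line contraction argument above.
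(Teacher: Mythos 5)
Your proposal follows essentially the same route as the paper's own proof: nonexpansiveness of the projection onto $\mathcal{Z}\times\mathbf{R}_+$, expansion of the squared distance to the saddle point, application of \eqref{eq:property_T_1}--\eqref{eq:property_T_2} to obtain the contraction factor $1-2\epsilon M+\epsilon^2L^2$, and the observation that this factor lies in $(0,1)$ exactly when $0<\epsilon<2M/L^2$. Your additional remarks on existence and uniqueness of the saddle point and on where $L$ and $M$ actually come from (strong convexity of the costs, the Tikhonov term for the dual block, bounded derivatives on the compact set $\mathcal{Z}$) are justifications the paper leaves implicit, so the proposal is correct and, if anything, slightly more complete.
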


We refer its proof to the Appendix. 

\subsection{Feedback from Nonlinear Power Flow}
The optimization problem \eqref{eq:opt} and its gradient algorithm \eqref{eq:primaldual} are based on the linearized power flow to guarantee its convexity, design computationally tractable algorithms, and prove the convergence of the algorithm to the saddle point. However, the linearization errors render the optimum different from the ones with nonlinear power flow, or the set points infeasible with nonlinear power flow. To address this issue, one paradigm is used to leverage feedback-based online optimization methods \cite{colombino2019online,dall2016optimal,zhou2017incentive} to reduce the modeling error. Particularly, we replace \eqref{eq:gradient_opf_pf} with the following nonlinear power flow
\begin{eqnarray}
\mathbf{r}^{k+1} &=& f(\mathbf{p}^k,\mathbf{q}^k)
\end{eqnarray}
that is executed by the physical system, and use the measured values $\mathbf{r}^{k+1}$ from the physical system to update dual variables \eqref{eq:gradient_opf_dual} in the next iteration. Convergence to a bounded range of the optimum can be analytically shown for such implementation. This also facilitates real-time implementation that can track the time-varying optimal set points \cite{dall2016optimal,zhou2017incentive}.

However, one crucial issue of such feedback-based algorithm has been largely overlooked: in practice, there are too few monitoring devices in distribution systems to measure all components of $\mathbf{r}$, and therefore it is impossible to implement feedback based algorithms to solve the problem \eqref{eq:opt}. 
Take the optimal voltage regulation problem (formulated in Appendix) used in Section~\ref{sec:num} as an example. Given voltage measurement deployed at three nodes only, dual variables can only be updated at these three locations, leaving the most of the 37-node system unobserved. As a result, the obtained solution may not be optimal or feasible for the original optimization problem. For instance, voltage violation may take place at unobserved locations; see Fig.~\ref{fig:results_comparison}. This motivates us to integrate state estimation into OPF solution methods for a better observed system.

%One way to address this issue is to use state estimation techniques to gain full observability of the distribution system, based on which we are able to update all the dual variables.

\section{Solving OPF with State Estimation Feedback}\label{sec:se}
%\subsection{Feedback from State Estimation}
To approach the actual operation scheme of distribution networks and also improve the observability and performance of existing OPF controllers without loss of computational effects, we additionally perform a state estimation based on available measurements, solving the sub-problem \eqref{eq:se_general}, before we update the dual variables \eqref{eq:gradient_opf_dual}. In principle, this allows us to feedback the ``full picture" of network real-time responses with the latest information and in turn compute the optimal decisions without any ``blind spots" of the grid. We use the following control diagram in Fig.\ref{fig:OPF_SE_diagram_1} to illustrate the mechanism of the proposed OPF framework with a state estimation feedback loop. 

\begin{figure}[!htbp]
\centering
\includegraphics[width=3.5in]{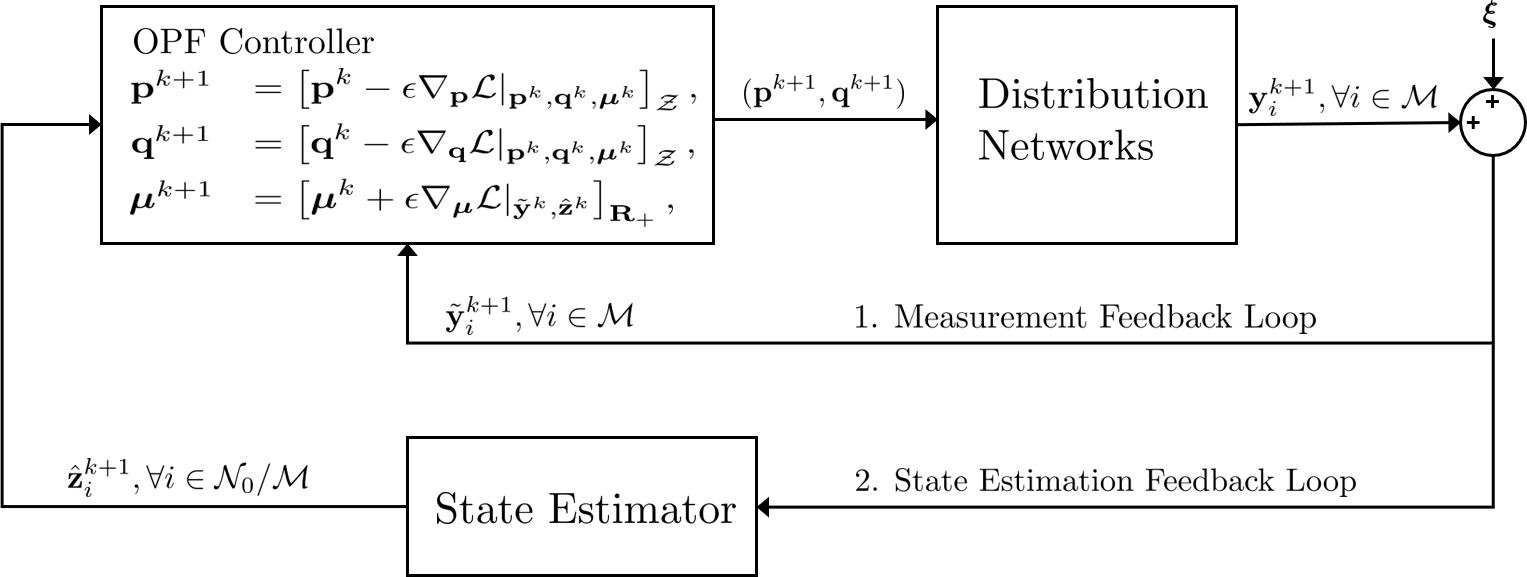}
\caption{The diagram of the proposed optimal power flow problem with state estimation feedback. In this case, we break the timescale to loop an off-line state estimation phase into a gradient-based OPF controller, due to the extreme computational efficiency of state estimation problems. Two feedback loops feed-in the online measurement $\tilde{\mathbf{y}}^{k+1}$ and estimation results $\hat{\mathbf{z}}^{k+1}$ to the OPF controller for the next gradient direction. We also assume that some of the measurement noises are less than the estimation errors to keep the online measurement loop.
}
\label{fig:OPF_SE_diagram_1}
\end{figure}

\subsection{State Estimation Problem}
We consider the following grid measurement model
\begin{equation*}
    \mathbf{y} = h(\mathbf{z}) + \bm{\xi},
\end{equation*}
where $\mathbf{z}$ is the true system states and $\mathbf{y}$ is the measurement vector, and $\bm{\xi}$ is measurement error, assumed to be distributed according to a normal distribution $\mathbb{N}(0, \Sigma)$. The measurement vector  $\mathbf{y}$ comprises both a limited set of sensor measurements and a set of so-called ``pseudo-measurement'', which subjected to Gaussian distributions based on the historical data (e.g., customer billing data and typical load profile) to roughly estimate power injections \cite{gomez2004power}.

To estimate the actual grid states from the available measurements, we consider a weighted least squares (WLS) estimator \cite{gomez2004power,wu1990power,zhou2019graident}, as follow
%When updating the dual variables in \eqref{eq:gradient_opf_dual}, most of the existing OPF frameworks and algorithms assume all system dynamics (of electrical quantities $\mathbf{r}$) are measurable or only admit limited measurement information. {\color{steelblue}YG: some references here}. 
% In practice, it is unrealistic and inefficient to have a real-time measurement information at every location due to the end-user privacy preservation, the extreme large-scale of distribution networks (i.e., up-to 100,000 nodes) and the high burden on the communication structure. On the other hand, only knowledge of a few nodes' information can miss the key features of the area without monitoring, which will definitely degrade the robust performance and operation efficiency.
%To approach the actual operation scheme of distribution networks and also improve the observability and robustness of existing OPF controllers without loss of computational effects, we additionally perform a state estimation based on the existing measurement, solving the sub-problem \eqref{eq:se}, before we update the dual variables \eqref{eq:gradient_opf_dual}. In principle, this allows us to feedback the ``full picture" of network real-time dynamics with the latest information and in turn compute the optimal decisions without any ``blind spots" of the grid. We use the following control diagram in Fig.\ref{fig:OPF_SE_diagram_1} to illustrate the mechanism of the proposed OPF framework with a state estimation feedback loop. 
\begin{equation}\label{eq:se_general}
\textbf{P2-SE:} ~~\underset{\mathbf{z}}{\min}  \frac{1}{2}\left(\mathbf{y} -h(\mathbf{z})\right)^\intercal W\left(\mathbf{y} -h(\mathbf{z})\right),
\end{equation}
 where the weight matrix is defined as $W = \Sigma^{-1}$. When the measurement model is nonlinear, this may be a difficult optimization problem to solve exactly; to ensure convergence and appropriate estimation algorithm should be carefully designed. In this case, we will subsequently introduce an assumption on the solution to this problem. With a linear measurement function, a solution to the estimation problem can be guaranteed provided sufficiently many sensor measurements, leading to grid observability (defined below).

To emphasize, this SE feedback problem must be appropriately specified to fit the various optimal control purposes of OPF problems. In particular for the OPF problem \eqref{eq:opt}, the true system states boil down to active and reactive power injections, denoted in a compact form as $\mathbf{z}:= [\mathbf{p}^\intercal,\mathbf{q}^\intercal]^\intercal\in\mathbf{R}^{2N}$, and all electrical quantities of interest $\mathbf{r}$ are uniquely determined by power flow coupling. The allocation of measurement $\mathcal{M}$ for distribution networks associating with measurement quantities $\mathbf{y}$ should meet the full observation requirement, which will guarantee the unique system states $(\mathbf{p},\mathbf{q})$. We will specify the formulation \eqref{eq:opt} and \eqref{eq:se_general} into a voltage regulation problem in Section~\ref{sec:num}, associated with its detailed formulation in the Appendix.
\begin{definition}[Full Observability]
The system is fully observable\footnote{This definition should be distinguished from observability of linear dynamical systems. Here, we limit the definition of observability to static SE problems throughout this manuscript\cite{wu1985network}.} (100\%) if $\mathbf{z}=0$ is the only solution for $h(\mathbf{z}) = 0$. Otherwise, the system has unobservable states, which leads to multiple solutions.
\end{definition}

\begin{assumption}
The WLS state estimation problem \eqref{eq:se_general} for distribution networks is fully observable with appropriate measurement allocation $\mathcal{M}$ and associated measurements $\mathbf{y}$.  
\end{assumption}

% \begin{assumption}\label{assumption_se_bounded}
% The difference between the optimal estimation $\hat{\mathbf{z}}^*$ from \textbf{P2-SE} and the true value $\mathbf{z}$ are bounded.
% \end{assumption}

% Since the optimal estimation is defined as $\hat{\mathbf{z}}^*:= [\hat{\mathbf{p}}^*,\hat{\mathbf{q}}^*,\hat{\mathbf{r}}^*]$
% in relation to $\hat{\mathbf{r}}^*=\mathbf{A}\hat{\mathbf{p}}^*+\mathbf{B}\hat{\mathbf{q}}^*+\mathbf{r}_0$. The estimated electrical quantities are bounded as long as the power injections are located at the pre-defined operation regions $(\mathbf{\hat{p}}^*,\mathbf{\hat{q}}^*) \in \mathcal{Z}$.

\subsection{State Estimation as Feedback}

Fig.~\ref{fig:OPF_SE_diagram_1} illustrates the proposed OPF controller with the state-estimation feedback loop. We assume that the measurement noises are less than the state estimation errors, and have small variances\footnote{This is an interesting assumption when we proceed our numerical tests. If some of the system states can be reached from both SE and noisy measurement, which one we should trust more, measurement or estimation? In fact, estimation errors explicitly depend on multiple factors, e.g., choose of estimators, sensor allocation $\mathcal{M}$ associating with measurement selection $\mathbf{y}$. We leave an open question here to inspire more discussions in future from both analytical and numerical perspectives.},  and then at the nodes with monitoring $\forall i \in\mathcal{M}$,  we utilize the measurement information to OPF controller instead of exploiting estimation results. This proposed methodology can be formally described in Algorithm \ref{algorithm:OPF-SE algorithm}.

\begin{algorithm}
    \caption{(OPF with SE Feedback Algorithm)}\label{algorithm:OPF-SE algorithm}
    \begin{algorithmic}[1]
        \Require nominal pattern of Netloads $(\mathbf{p}^0,\mathbf{q}^0) \in
        \mathcal{N}$
        \For{$k = 0:K$}
            \State $\mathbf{p}^{k+1} = \left[\mathbf{p}^k - \epsilon \nabla_{\mathbf{p}} \mathcal{L}(\mathbf{p}^k,\mathbf{q}^k,
            \bm{\mu}^k) \right]_{\mathcal{Z}}$
            \State $\mathbf{q}^{k+1} = \left[\mathbf{q}^k - \epsilon \nabla_{\mathbf{q}} \mathcal{L}(\mathbf{p}^k,\mathbf{q}^k,
            \bm{\mu}^k) \right]_{\mathcal{Z}}$
            \State $\mathbf{r}^{k+1} \leftarrow  \textrm{nonlinear power flow}  ~(\mathbf{p}^{k+1}, \mathbf{q}^{k+1})$
            \State system measurement $\tilde{\mathbf{y}}_i^{k+1}, \forall i \in \mathcal{M}$
            \State state estimation of system response
            $\hat{\mathbf{z}}_i^{k+1}, \forall i \in \mathcal{N}_0
            /\mathcal{M}$
            \State $\bm{\mu}^{k+1} =  \left[\bm{\mu}^{k} + \epsilon\nabla_{\bm{\mu}} \mathcal{L}(\mathbf{\tilde{y}}^{k+1},\hat{\mathbf{z}}^{k+1})\right]_{\mathbf{R}_+}$
        \EndFor
\end{algorithmic}
\end{algorithm} 
\subsection{Convergence Analysis}
With the state estimator (state estimation) feedback loop for the OPF controller shown in Fig.~\ref{fig:OPF_SE_diagram_1}, we propose a gradient-based OPF controller with SE feedback, which is described in Algorithm 1. At each time-slot, the system pursuit by the following steps
\begin{subequations}\label{eq:gradient_SE_OPF_Nonlinear}
\begin{equation}\label{eq:gradient_p}
\begin{aligned}
& \mathbf{p}^{k+1}  \\
& = \left[\mathbf{p}^{k} - \epsilon \left(\nabla_{\mathbf{p}}C(\mathbf{p}^{k},\mathbf{q}^{k}) + \nabla_{\mathbf{p}}C_0(\mathbf{p}^{k},\mathbf{q}^{k})+\mathbf{A}^\intercal\bm{\mu}^k 
    \right)\right]_{\mathcal{Z}},
\end{aligned}
\end{equation}
\begin{equation}\label{eq:gradient_q}
\begin{aligned}
& \mathbf{q}^{k+1} \\
& =  \left[\mathbf{q}^{k} - \epsilon \left(
    \nabla_{\mathbf{q}}C(\mathbf{p}^{k},\mathbf{q}^{k}) + \nabla_{\mathbf{q}}C_0(\mathbf{p}^{k},\mathbf{q}^{k})+\mathbf{B}^\intercal \bm{\mu}^k
    \right)\right]_{\mathcal{Z}},
\end{aligned}
\end{equation}
\begin{equation}\label{eq:gradient_mu_upper}
\begin{aligned}
\bm{\mu}^{k+1} = \left[\bm{\mu}^k + \epsilon \left(\mathbf{g}(\mathbf{r}^k) - \eta\bm{\mu}^k\right) \right]_{\mathbf{R}_{+}},~~~~~~~~~~~~~~~~~~~~~~~
\end{aligned}
\end{equation}
\begin{equation}\label{eq:gradient_voltage_from_SE}
\begin{aligned}
    & \mathbf{r}^{k+1} \leftarrow (\hat{\mathbf{z}}^{k+1},\tilde{\mathbf{y}}^{k+1}),\\ & \textrm{(jointly updated from state estimator and measurement)}.~
\end{aligned}
\end{equation}
\end{subequations}
The above iteration \eqref{eq:gradient_SE_OPF_Nonlinear} is performed to its convergence. At $k$-th iteration, we jointly update the system states based on the state estimation results in \eqref{eq:gradient_voltage_from_SE} and the measurement $\tilde{\mathbf{y}}$. These estimation results are attained from the optimizer of \eqref{eq:se_general}, which enable the feasibility of the nonlinear power flow constraints \eqref{powerflow}.

\begin{assumption}
At $k$-th iteration of OPF controller during any time slot, there exists a constant $0< \psi < +\infty$ to bound the difference between the actual system states and the states attained from various state estimators \eqref{eq:se_general}
such that, $\| \mathbf{z}^{k} - \mathbf{\hat{z}}^{k*} \|_2 < \psi $.
\end{assumption}

As the state estimation in distribution networks has been widely discussed for different applications \cite{primadianto2016review}, the existing literature shows that these type of methods lead to an accurate and computational-efficient approximation under nominal operating condition. This allows us to expect a small bound constant $\psi$. 

\begin{lemma}\label{lemma:nonlinear difference}
We here define a nonlinear operator $\mathcal{T}_{\textrm{NL}}$ to represent the iteration \eqref{eq:gradient_SE_OPF_Nonlinear}, which can be regarded as a counterpart operator of the linear operator $\mathcal{T}$ in \eqref{eq:linear_operator}. Then, there exist some constant $\rho > 0$ for the following condition hold
\begin{equation}\label{eq:Linear_NonLinear_difference}
    \|\mathcal{T}(\mathbf{x}^k) - \mathcal{T}_{\textrm{NL}}(\mathbf{x}^k)\|_2^2 \leq \rho,
\end{equation}
where $\{\mathbf{x}^k | ~ \mathbf{x}^k := [(\mathbf{p}^k)^\intercal, (\mathbf{q}^k)^\intercal, (\bm{\mu}^k)^\intercal ]^\intercal, k \leq K \}$ is the sequence generated by Algorithm 1.
\end{lemma}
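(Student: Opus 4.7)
The plan is to exploit the fact that the only structural difference between the linear operator $\mathcal{T}$ and its nonlinear counterpart $\mathcal{T}_{\textrm{NL}}$ lies in how $\mathbf{r}^k$ is formed when evaluating the dual gradient. Comparing \eqref{eq:linear_operator} with the explicit iteration \eqref{eq:gradient_SE_OPF_Nonlinear}, the primal blocks (the $\mathbf{p}$- and $\mathbf{q}$-rows) coincide in both operators, since both use $\mathbf{A}^\intercal \bm{\mu}^k$ and $\mathbf{B}^\intercal \bm{\mu}^k$ together with the same $\nabla C_i$ and $\nabla C_0$ evaluated at the current primal iterate. Consequently, only the dual block contributes to $\|\mathcal{T}(\mathbf{x}^k) - \mathcal{T}_{\textrm{NL}}(\mathbf{x}^k)\|_2^2$, and this block reduces to $\epsilon^2 \|\mathbf{g}(\mathbf{r}^k_{\textrm{lin}}) - \mathbf{g}(\mathbf{r}^k_{\textrm{NL}})\|_2^2$, where $\mathbf{r}^k_{\textrm{lin}} := \mathbf{A}\mathbf{p}^k + \mathbf{B}\mathbf{q}^k + \mathbf{r}_0$ and $\mathbf{r}^k_{\textrm{NL}}$ is the quantity jointly constructed from $\tilde{\mathbf{y}}^k$ and $\hat{\mathbf{z}}^k$ in \eqref{eq:gradient_voltage_from_SE}.

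Next, Assumption~\ref{assumption_1} provides that $\mathbf{g}$ has bounded derivatives, so $\mathbf{g}$ is globally Lipschitz on the operating region with some constant $L_g$, and the task collapses to bounding $\|\mathbf{r}^k_{\textrm{lin}} - \mathbf{r}^k_{\textrm{NL}}\|_2$. I would insert the true nonlinear power flow $f(\mathbf{p}^k,\mathbf{q}^k)$ as an intermediate term and apply the triangle inequality together with $(a+b)^2 \le 2a^2 + 2b^2$ to obtain
\begin{align*}
    \|\mathbf{r}^k_{\textrm{lin}} - \mathbf{r}^k_{\textrm{NL}}\|_2^2
    & \le 2\|\mathbf{A}\mathbf{p}^k + \mathbf{B}\mathbf{q}^k + \mathbf{r}_0 - f(\mathbf{p}^k,\mathbf{q}^k)\|_2^2 \\
    & \quad + 2\|f(\mathbf{p}^k,\mathbf{q}^k) - \mathbf{r}^k_{\textrm{NL}}\|_2^2.
\end{align*}
The first term is the deterministic linearization residual, which is uniformly bounded because the iterates $(\mathbf{p}^k,\mathbf{q}^k)$ are confined by projection to the compact set $\mathcal{Z}$ and both $f$ and its linear surrogate are continuous. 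The second term is the state-reconstruction error: on the monitored subset $i\in\mathcal{M}$ it reduces to measurement noise of small variance by hypothesis, whereas on the unmonitored subset $i\in\mathcal{N}_0\setminus\mathcal{M}$ it is controlled by the estimation gap $\|\mathbf{z}^k - \hat{\mathbf{z}}^{k\ast}\|_2 < \psi$ of Assumption~3, propagated through the Lipschitz continuity of $f$ on $\mathcal{Z}$. Collecting these three contributions and multiplying by $\epsilon^2 L_g^2$ yields the constant $\rho$.

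The main obstacle is the treatment of the Gaussian measurement noise: since it is not uniformly bounded, the inequality \eqref{eq:Linear_NonLinear_difference} must either be read in an expected-value sense, established with high probability, or invoked under the paper's standing assumption that sensor noise has small variance and is dominated by the estimation error. A secondary book-keeping subtlety is that Assumption~3 bounds the estimation gap of $\mathbf{z}$ (power injections) while the lemma concerns $\mathbf{r}$ (electrical quantities of interest), so the local Lipschitz constant of the nonlinear power-flow map $f$ on the compact set $\mathcal{Z}$ must be invoked explicitly when transferring the $\psi$ bound from state space to output space.
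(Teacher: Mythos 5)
The paper does not actually prove this lemma --- it states ``we omit the proof here'' and defers to \cite{bertsekas1989parallel,zhou2017incentive}, adding only the remark that the estimation error bound $\psi$ from the last assumption is what ``informs'' the constant $\rho$. So there is no in-paper argument to match yours against; what you have written is the argument the paper leaves implicit, and it is the natural one. Your decomposition --- the discrepancy lives only in the dual block, reduces via the Lipschitz continuity of $\mathbf{g}$ (bounded derivatives, Assumption~\ref{assumption_1}) to $\|\mathbf{r}^k_{\textrm{lin}}-\mathbf{r}^k_{\textrm{NL}}\|_2$, and then splits into a linearization residual bounded by compactness of $\mathcal{Z}$ plus a reconstruction error bounded by measurement noise and the estimation gap $\psi$ --- is consistent with the paper's one-sentence remark and with how such bounds are established in the cited feedback-optimization literature. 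Two of your own caveats deserve emphasis because they are genuine gaps in the paper's treatment rather than defects of your argument: the Gaussian measurement noise is not almost-surely bounded, so \eqref{eq:Linear_NonLinear_difference} can only hold in expectation, with high probability, or under an implicit bounded-noise assumption, none of which the paper states; and the $\psi$ bound is on the state vector $\mathbf{z}=(\mathbf{p},\mathbf{q})$ while the dual update needs a bound on $\mathbf{r}$, so the (local) Lipschitz constant of the power-flow map $f$ on $\mathcal{Z}$ must indeed be invoked, which the paper never does. Two small corrections: the $\psi$ bound is the paper's Assumption~4, not Assumption~3; and your claim that the primal blocks of $\mathcal{T}$ and $\mathcal{T}_{\textrm{NL}}$ coincide exactly relies on $\mathbf{g}$ being affine in $\mathbf{r}$ (as in the voltage-regulation instance, where $\nabla_{\mathbf{p}}\mathcal{L}$ reduces to $\mathbf{A}^\intercal\bm{\mu}^k$) --- for a general nonlinear $\mathbf{g}$ the primal blocks would also differ through $\nabla\mathbf{g}(\mathbf{r}^k)$, though the same bounded-derivative and compactness argument would absorb that term into $\rho$.
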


We omit the proof here and refer the interested readers to \cite{bertsekas1989parallel,zhou2017incentive}. It is worth noting that this condition holds only under Assumption 4, so that estimation error bound $\psi$ informs the constant $\rho$. 

% In the rest of this section, we will analyze the convergence and track capability of the proposed Algorithm \ref{algorithm:OPF-SE algorithm} for OPF framework with SE feedback.

\begin{theorem}\label{theorem_2}
Given the prescribed step size $\epsilon$ based on Theorem 1, and under Assumptions 1-4, the sequence $\{\mathbf{x}^k | ~ \mathbf{x}^k := [(\mathbf{p}^k)^\intercal, (\mathbf{q}^k)^\intercal, (\bm{\mu}^k)^\intercal ]^\intercal, k \leq K \}$ generated by Algorithm \ref{algorithm:OPF-SE algorithm} is bounded by
\begin{equation}\label{eq:upper_bounds_Nonlinear}
    \lim_{K\to \infty} \sup ~\|\mathbf{x}^K - \mathbf{x}^* \|_2^2 = \frac{\epsilon^2 \rho}{2\epsilon M - \epsilon^2L^2 },
\end{equation}
where $\mathbf{x}^*=[(\mathbf{p}^*)^\intercal, (\mathbf{q}^*)^\intercal, (\bm{\mu}^*)^\intercal ]^\intercal$ is the optimizer of $\mathcal{L}$ in \eqref{eq:maxmin_L}.
\end{theorem}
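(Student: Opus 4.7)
The plan is to derive a one-step recursion of the form $e_{k+1} \leq C\, e_k + \epsilon^2\rho$, where $e_k := \|\mathbf{x}^k - \mathbf{x}^*\|_2^2$ and the contraction coefficient $C = 1 - 2\epsilon M + \epsilon^2 L^2$ lies strictly in $(0,1)$ thanks to the stepsize condition from Theorem~\ref{theorem_1}. Once such a recursion is in hand, iterating it and taking limits yields the geometric-series bound $\limsup_{K\to\infty} e_K \leq \epsilon^2\rho/(1-C) = \epsilon^2\rho/(2\epsilon M - \epsilon^2 L^2)$, which is exactly the claim.

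To set up the recursion, I would start from the fact that $\mathbf{x}^*$ is the unique saddle point of Theorem~\ref{theorem_1}, so $\mathbf{x}^* = [\mathbf{x}^* - \mathcal{T}(\mathbf{x}^*)]_{\mathcal{Z}\times\mathbf{R}_+}$, while Algorithm~\ref{algorithm:OPF-SE algorithm} produces $\mathbf{x}^{k+1} = [\mathbf{x}^k - \mathcal{T}_{\textrm{NL}}(\mathbf{x}^k)]_{\mathcal{Z}\times\mathbf{R}_+}$. Subtracting and invoking the non-expansiveness of the Euclidean projection onto the convex set $\mathcal{Z}\times\mathbf{R}_+$ gives
\begin{equation*}
\|\mathbf{x}^{k+1}-\mathbf{x}^*\|_2^2 \leq \bigl\|(\mathbf{x}^k-\mathbf{x}^*) - \bigl[\mathcal{T}_{\textrm{NL}}(\mathbf{x}^k) - \mathcal{T}(\mathbf{x}^*)\bigr]\bigr\|_2^2.
\end{equation*}
The next step is the key decomposition $\mathcal{T}_{\textrm{NL}}(\mathbf{x}^k) - \mathcal{T}(\mathbf{x}^*) = [\mathcal{T}(\mathbf{x}^k) - \mathcal{T}(\mathbf{x}^*)] + [\mathcal{T}_{\textrm{NL}}(\mathbf{x}^k) - \mathcal{T}(\mathbf{x}^k)]$, which separates the ``nominal'' contractive part (handled by \eqref{eq:property_T_1} and \eqref{eq:property_T_2}) from the ``perturbation'' part (bounded by $\rho$ via Lemma~\ref{lemma:nonlinear difference}).

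Expanding the squared norm, the nominal part produces $\|\mathbf{x}^k-\mathbf{x}^*\|_2^2 - 2\langle \mathbf{x}^k-\mathbf{x}^*, \mathcal{T}(\mathbf{x}^k)-\mathcal{T}(\mathbf{x}^*)\rangle + \|\mathcal{T}(\mathbf{x}^k)-\mathcal{T}(\mathbf{x}^*)\|_2^2$, which by strong monotonicity and Lipschitz continuity collapses to $(1 - 2\epsilon M + \epsilon^2 L^2)\|\mathbf{x}^k-\mathbf{x}^*\|_2^2$; the perturbation part contributes at most $\epsilon^2\rho$ directly from Lemma~\ref{lemma:nonlinear difference}. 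The main obstacle is the cross term $-2\langle \mathbf{x}^k-\mathbf{x}^*,\,\mathcal{T}_{\textrm{NL}}(\mathbf{x}^k)-\mathcal{T}(\mathbf{x}^k)\rangle$ and the analogous mixed term inside the squared norm of the full perturbation: bounding them naively by Cauchy--Schwarz produces a residual of order $\sqrt{\rho}\,\|\mathbf{x}^k-\mathbf{x}^*\|$, which does not fit the clean recursion. I would resolve this with a judicious Young's inequality of the form $2ab \leq \alpha a^2 + \alpha^{-1} b^2$, choosing $\alpha$ so that the $\alpha a^2$ piece is absorbed into the slack between $C < 1$ and the original contraction factor, while the $\alpha^{-1} b^2$ piece combines with $\epsilon^2 \rho$ to leave an additive constant consistent with the asserted asymptotic level.

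Having secured the scalar recursion $e_{k+1} \leq C\, e_k + \epsilon^2\rho$ with $C\in(0,1)$, the conclusion follows immediately: $e_K \leq C^K e_0 + \epsilon^2\rho\,(1-C^K)/(1-C)$, so $\limsup_{K\to\infty} e_K \leq \epsilon^2\rho/(1-C) = \epsilon^2\rho/(2\epsilon M - \epsilon^2 L^2)$. I expect the cross-term bookkeeping in the third step to be the only delicate part; the outer contraction/geometric-series argument and the invocation of Lemma~\ref{lemma:nonlinear difference} are standard once the decomposition is in place.
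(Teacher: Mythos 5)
Your argument follows the same skeleton as the paper's proof: non-expansiveness of the projection, insertion of $\pm\,\epsilon\mathcal{T}(\mathbf{x}^{k})$ to split the update into a nominal contractive part (handled by \eqref{eq:property_T_1}--\eqref{eq:property_T_2}) and a perturbation bounded via Lemma~\ref{lemma:nonlinear difference}, then the scalar recursion $e_{k+1}\le \Gamma e_k + \epsilon^2\rho$ with $\Gamma = 1-2\epsilon M+\epsilon^2 L^2\in(0,1)$ and the geometric series. The one place you genuinely diverge is exactly the step you flag as delicate. Writing $a = \mathbf{x}^{k}-\epsilon\mathcal{T}(\mathbf{x}^{k})-\mathbf{x}^*+\epsilon\mathcal{T}(\mathbf{x}^*)$ and $b = \epsilon\mathcal{T}(\mathbf{x}^{k})-\epsilon\mathcal{T}_{\mathrm{NL}}(\mathbf{x}^{k})$, the paper passes from $\|a+b\|_2^2$ directly to $\|a\|_2^2+\epsilon^2\rho$, i.e., it silently discards the cross term $2\langle a,b\rangle$ with no justification; that is a gap in the published argument, and you are right to treat it as the crux. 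Your Young's-inequality repair $2\langle a,b\rangle \le \alpha\|a\|_2^2+\alpha^{-1}\|b\|_2^2$ makes the recursion rigorous, but be aware that it cannot reproduce the literal constant in \eqref{eq:upper_bounds_Nonlinear}: the contraction factor inflates to $(1+\alpha)\Gamma$ and the additive term to $(1+\alpha^{-1})\epsilon^2\rho$, so your limit is $\frac{(1+\alpha^{-1})\epsilon^2\rho}{1-(1+\alpha)\Gamma}$ for any admissible $\alpha>0$ with $(1+\alpha)\Gamma<1$, which is strictly larger than $\frac{\epsilon^2\rho}{2\epsilon M-\epsilon^2 L^2}$ and does not converge to it as $\alpha\to 0$ uniformly (the two effects trade off). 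So your route proves a bound of the same order and the same qualitative conclusion, which is what actually matters here, but the exact level asserted in the theorem (which, incidentally, should be stated as an inequality rather than an equality) is obtained in the paper only by the unjustified omission of the cross term; recovering it exactly would require an additional structural argument that the cross term is non-positive, which neither you nor the paper provides.
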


\begin{proof}
Given the operator $\mathcal{T}$ Lipschitz continuous and strongly monotone \eqref{eq:property_T_1}-\eqref{eq:property_T_2} and under the Assumption 1-4 and Lemma \ref{lemma:nonlinear difference},
we show the discrepancy between the optimizer $\mathbf{x}^*$ of \eqref{eq:maxmin_L} and the sequence $\{\mathbf{x}^k\}$ at $K$-th iteration
\begin{align}
      & \|\mathbf{x}^K - \mathbf{x}^*\|_2^2 \nonumber \\
    \leq ~& \|\mathbf{x}^{K-1} -\epsilon\mathcal{T}_{\textrm{NL}}(\mathbf{x}^{K-1}) - \mathbf{x}^* + \epsilon \mathcal{T}(\mathbf{x}^*) \|_2^2 \nonumber\\
    = ~& \|\mathbf{x}^{K-1} -\epsilon\mathcal{T}_{\textrm{NL}}(\mathbf{x}^{K-1}) - \epsilon\mathcal{T}(\mathbf{x}^{K-1}) \nonumber \\
    & ~~~~~~~~~~~~~~~~~~~~~~~ + \epsilon\mathcal{T}(\mathbf{x}^{K-1}) - \mathbf{x}^* + \epsilon \mathcal{T}(\mathbf{x}^*) \|_2^2 \nonumber\\
    \leq ~& \|\mathbf{x}^{K-1} -\epsilon \mathcal{T}(\mathbf{x}^{K-1}) - \mathbf{x}^* + \epsilon\mathcal{T}(\mathbf{x}^*) \|_2^2 + \epsilon^2 \rho\nonumber\\
    = ~& \|\mathbf{x}^{K-1} - \mathbf{x}^* \|_2^2 + \|\epsilon \mathcal{T}(\mathbf{x}^{K-1}) - \epsilon \mathcal{T}(\mathbf{x}^*)\|_2^2\nonumber\\
    & ~~~~~~~ -2\epsilon (\mathcal{T}(\mathbf{x}^{K-1}) - \mathcal{T}(\mathbf{x}^*))^\intercal (\mathbf{x}^{K-1} - \mathbf{x}^*\nonumber) + \epsilon^2\rho\\
    \leq ~& \left(\epsilon^2 L^2 - 2\epsilon M + 1 \right) \|\mathbf{x}^{k-1} - \mathbf{x}^*\|_2^2 + \epsilon^2 \rho. \label{eq:recursively step}
\end{align}
The non-expensiveness of projection operator results in the first inequality. The second inequality comes from Lemma \ref{lemma:nonlinear difference} and the last inequality is based on \eqref{eq:property_T_1}-\eqref{eq:property_T_2}. Let $\Gamma = \epsilon^2L^2 - 2\epsilon M + 1$ and recursively implement this relationship in \eqref{eq:recursively step} backwards to the initial step, then comes to
\begin{align}
    & \|\mathbf{x}^K - \mathbf{x}^*\|_2^2 \leq \Gamma^K \|\mathbf{x}^{0} - \mathbf{x}^*\|_2^2 + \epsilon^2 \rho \left( \frac{1-\Gamma^K}{1-\Gamma} \right). \label{eq:recursively initial step}
\end{align}
We have any step size chosen as $\epsilon < \frac{2M}{L^2}$, in Theorem \ref{theorem_1}, which leads $0 < \Gamma < 1 $. For such $\Gamma$ and any initial condition $x^0 \in \mathbf{R}$, when $K \to \infty$, we come to an upper bound of this discrepancy
\begin{equation}
    \lim_{K\to \infty}\sup ~ \|\mathbf{x}^K - \mathbf{x}^* \|_2^2 = \frac{\epsilon^2 \rho}{2\epsilon M - \epsilon^2L^2 },
\end{equation}
which concludes the proof.
\end{proof}

The condition \eqref{eq:upper_bounds_Nonlinear} from Theorem \ref{theorem_2} provides an upper bound on the distance between the sequence $\{\mathbf{x}^k | ~ \mathbf{x}^k := [(\mathbf{p}^k)^\intercal, (\mathbf{q}^k)^\intercal, (\bm{\mu}^k)^\intercal ]^\intercal, k \leq K, K \to \infty \}$ generated by our proposed OPF with SE feedback algorithm \eqref{eq:gradient_SE_OPF_Nonlinear} and the saddle point $\mathbf{x}^*$ of \eqref{eq:maxmin_L}. This analytical bound indicates that our proposed control diagram in Fig.~\ref{fig:OPF_SE_diagram_1} has robust performance to estimation errors and measurement noises
\begin{itemize}
    \item \emph{Inherent measurement noises:} The online measurements by PMUs are typically within $1\% \sim 2\%$ of actual values. The pseudo measurements of active and reactive power can be regarded as a rough initialization (i.e., up-to 50\% variations in comparison to actual values). These errors can be reduced through the estimation phase in \eqref{eq:se_general}, which improves decisions from the OPF controller with SE feedback \eqref{eq:gradient_voltage_from_SE} , yielding better robustness to measurement noises and power variability; 
    
    \item \emph{Linearization approximation errors:} The OPF-phase (and some of state estimators) in the proposed algorithm utilize the linear power flow to promote the affordable-computational efficiency. The discrepancy between linear operator $\mathcal{T}$ and nonlinear operator $\mathcal{T}_{\textrm{NL}}$ has been quantified in \eqref{eq:Linear_NonLinear_difference} by $\rho$, which incorporates nonlinear power flow of distribution networks when implementing the set-points $(\mathbf{p}^k, \mathbf{q}^k)$ for controllable devices.
\end{itemize}
In the next section, we demonstrate the effectiveness and flexibility of the proposed OPF framework with SE feedback for its efficient convergence and robustness performance on the IEEE 37-node distribution network.

\section{Numerical Results}\label{sec:num}

In this section, we apply our OPF framework with SE feedback algorithm to mitigate an overvoltage situation and minimize the PV curtailment in distribution networks by controlling set points of renewable energy resources (RESs) through limited measurement of voltage magnitudes. We admit the electrical quantities of interests down to voltage magnitude, and formulate an OPF problem for voltage regulation. In the state estimation phase, we use pseudo measurement of nodal power injections incorporating with limited number of voltage magnitude measurement to estimate the whole voltage magnitude of distribution networks. The detailed mathematical formulations are given in Appendix. At each time step, the set points of controllable RESs are repeatedly optimized along the gradient direction until convergence under the feedback of the fully observable voltage updates from voltage estimator.

We use a modified IEEE 37-node test feeder \cite{zimmerman2010matpower} shown in Fig.~\ref{fig:37-node feeder} to demonstrate our proposed OPF with SE feedback algorithm. The network is three-phase balanced and we place 21 photovoltaic (PV) distributed generators, whose locations are marked by boxes in Fig.~\ref{fig:37-node feeder}. The stars also indicate three actual voltage magnitude measurement, whose sensor noises follow the independent normal distributions with zero mean and 1\% standard deviation of their actual values. All nodes have pseudo-measurement of power injections, whose errors are subjected to the independent normal distribution with zero mean and 50\% standard deviation of their nominal values based on historical data. In general, this framework can be adapted to the time-varying systems with heterogeneous inputs. To simplify presentation, we show the results in a single time step (e.g., 12:00 PM), which fixes the inputs and runs the algorithm to convergence. The real-time measurement of solar irradiation and load profile is derived from the measurement of the feeders in Anatolia, California \cite{Bank13}. The simulation is implemented on a laptop with Intel Core i7-6600U CPU@2.6GHz 8.00GB RAM, using MATLAB and MATPOWER \cite{zimmerman2010matpower}.

\begin{figure}
    \centering
    \includegraphics[width=2in]{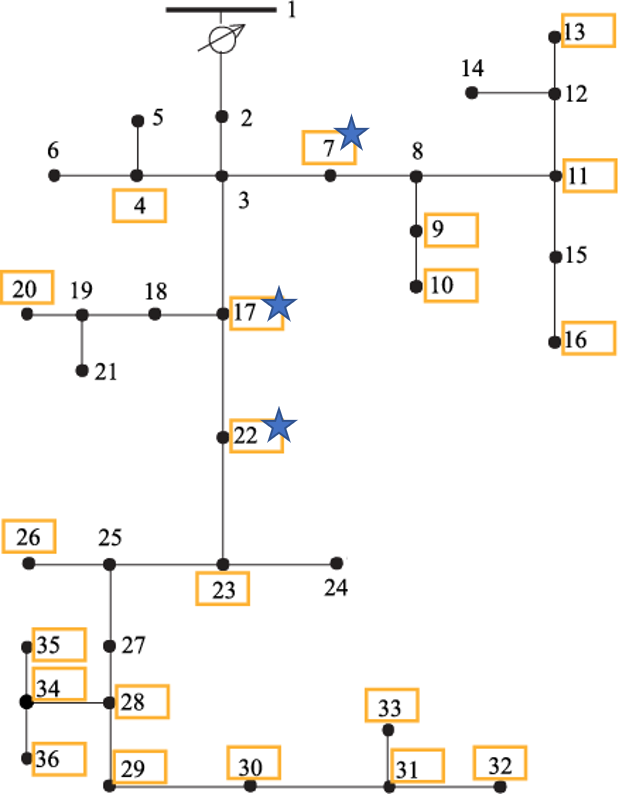}
    \caption{IEEE 37-node test feeder. The stars indicate the voltage sensor placement.}
    \label{fig:37-node feeder}
\end{figure}

\begin{figure}
    \centering
    \includegraphics[width=2.85in]{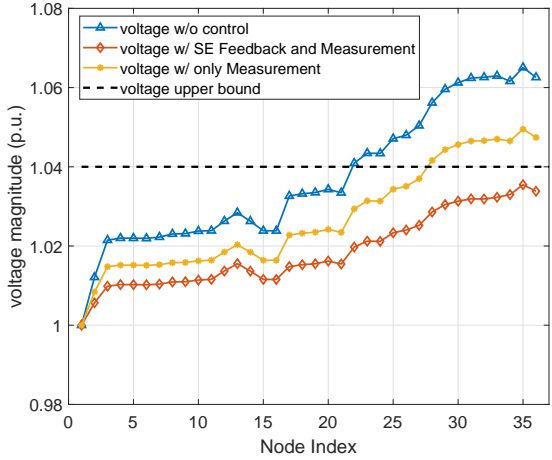}
    \caption{Voltage magnitude profiles comparison under different control schemes. There is a small gap between the voltage of node \#35 and voltage upper bound due to less then 1\% estimation errors in average.}
    \label{fig:results_comparison}
\end{figure}

For comparison purposes, we also introduce another scenario to show the limits of the OPF controller only having the knowledge of the voltage measurement instead of the whole system-wise voltage estimation. Fig.~\ref{fig:results_comparison} shows the solutions of the proposed OPF controller with SE feedback. It can be seen that the proposed OPF controller successfully mitigates the overvoltage situation under the ``full" picture of voltage information. The results illustrate the proposed OPF controller's performance guarantee with a limited number of sensors.

Fig.~\ref{fig:results_comparison} also demonstrates the performance degradation of the OPF controller with only voltage measurements (no state estimation feedback). Without a whole description of voltage magnitude, the OPF controller has limited knowledge of overvoltage at certain nodes. Only the voltage at nodes having voltage monitoring can be accurately optimized, and the large voltage uncertainties at other nodes can threaten safe operation. To emphasize, it is unrealistic to have a complete voltage measurements, and it is prohibitive to install sensors at all end-users locations due to privacy and cost concerns. Under this circumstance, there are always ``blind" points in distribution networks facing overvoltage/undervoltage situation. Unfortunately, our hands are tied to directly reach out every corner of networks, especially for extremely large networks. This comparison motivates the incorporation of state estimation feedback for the OPF controller as an effective and efficient method to integrate voltage estimates, which guarantees system performance for economic and safe operation.

Fig.~\ref{fig:convergence signal_voltage} and Fig.~\ref{fig:covergence signal_signal} show the convergence of the proposed OPF controller with SE feedback. It is clear that the system has  more robust performance to the noisy measurement and state estimation errors. The SE feedback solver takes 0.2s to reach the optimal voltage estimates, and the OPF controller needs 0.1s to compute its gradient step based on SE results at each iteration. Fig.~\ref{fig:estimation errors} shows the average and maximum errors of voltage estimation at each OPF iteration. We also include the running average for the average and maximum SE errors in Fig.~\ref{fig:estimation errors}. It can be seen that average SE error approaches to 0.67\% and the maximal SE error approaches to 1.37\%, falling in an acceptable range for OPF feedback.

Overall, we conclude that the proposed OPF controller with SE feedback is able to systemically estimate system voltages at unmeasured nodes, successfully mitigating the overvoltage situation and providing robustness to measurement errors and estimation errors. The benefits of closing the loop between OPF control and state estimation can be clearly observed from the perspectives of effectiveness, robustness and efficiency.

\begin{figure}
    \centering
    \includegraphics[width=2.9in]{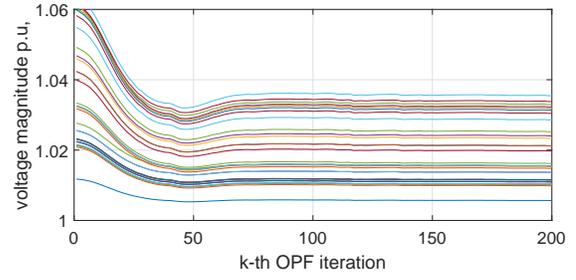}
    \caption{The voltage convergence of the OPF controller with SE Feedback.}
    \label{fig:convergence signal_voltage}
\end{figure}

\begin{figure}
    \centering
    \includegraphics[width=3.1in]{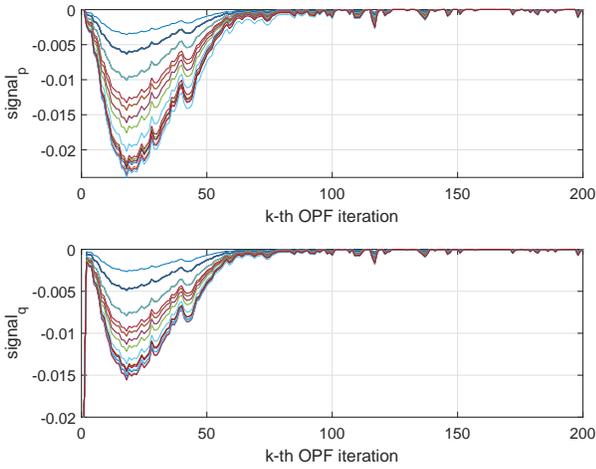}
    \caption{The convergence of voltage violation signals $\mathbf{A}^{\intercal}\bm{\mu}$ and $\mathbf{B}^{\intercal}\bm{\mu}$, which are projected to the gradient updates of inverter setpoints (i.e., active and reactive power).}
    \label{fig:covergence signal_signal}
\end{figure}

\begin{figure}
    \centering
    \includegraphics[width=3.15in]{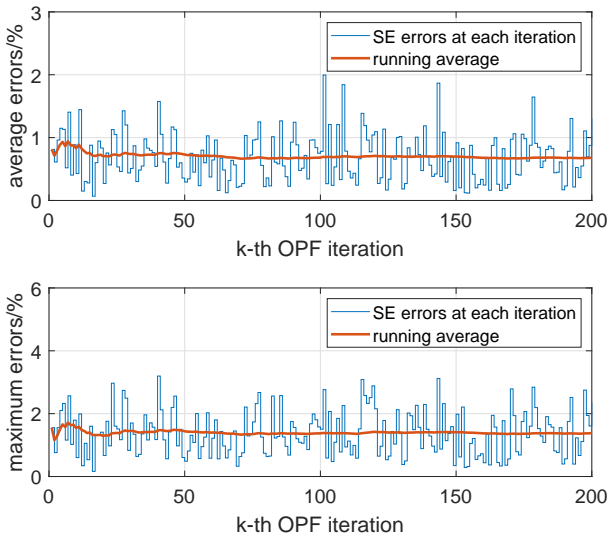}
    \caption{The average and maximum SE errors.}
    \label{fig:estimation errors}
\end{figure}

\section{Conclusions}\label{sec:con}
In this paper, we proposed a general optimal power flow problem with state estimation feedback to facilitate the operation of modern distribution networks. The controller depends explicitly on the state estimation results derived from the system measurement. In contrast to the existing works, our method introduces an additional feedback loop to the OPF controller to estimate the system voltages from a limited number of sensors rather than making strong assumptions on full observability or being ``trapped" by limited system measurements. The performance of our design is analytically characterized and numerically demonstrated. 

Our results on OPF problem provide an initial step towards closing a loop between optimal control and state estimation in power systems, which tractably incorporates state estimation to promote efficient and robust performance.

%\section*{Acknowledgement}
%This work was authored in part by the National Renewable Energy Laboratory, operated by Alliance for Sustainable Energy, LLC, for the U.S. Department of Energy (DOE) under Contract No. DE-EE-0007998. Funding provided by U.S. Department of Energy Office of Energy Efficiency and Renewable Energy Solar Energy Technologies Office. The views expressed in the article do not necessarily represent the views of the DOE or the U.S. Government. The U.S. Government retains and the publisher, by accepting the article for publication, acknowledges that the U.S. Government retains a nonexclusive, paid-up, irrevocable, worldwide license to publish or reproduce the published form of this work, or allow others to do so, for U.S. Government purposes.

% This work was authoredin partby the National Renewable Energy Laboratory, operated by Alliance for Sustainable
% Energy, LLC, for the U.S. Department of Energy (DOE)
% under Contract No. DE-EE-0007998. Funding provided by U.S. Department of Energy Office of Energy Efficiency and Renewable Energy Solar Energy Technologies Office. The views expressed in the article do not necessarily represent the views of the DOE or the U.S. Government. The U.S. Government retains and the publisher, by accepting the article for publication, acknowledges that the U.S. Government retains a nonexclusive, paid-up, irrevocable, worldwide license to publish or reproduce the published form of this work, or allow others to do so, for U.S. Government purposes.

\appendix

\subsection{Proof of Theorem~\ref{theorem_1}}
\begin{proof}
We have the point of the sequence at $k$-th iteration defined by $\mathbf{x}^{k} :=[(\mathbf{p}^{k})^\intercal, (\mathbf{q}^k)^\intercal, (\bm{\mu}^{k})^\intercal]^\intercal$ and the optimizer of \eqref{eq:maxmin_L} indicated by $\mathbf{x}^{*} :=[(\mathbf{p}^*)^\intercal, (\mathbf{q}^*)^\intercal, (\bm{\mu}^*)^\intercal]^\intercal$. Since the linear operator $\mathcal{T}$ is Lipschitz continuous and strongly monotone \cite{bertsekas1989parallel}, we have following condition held
 and then we show the distance between $\mathbf{x}^{k}$ and the saddle point $\mathbf{x}^*$
\begin{align}
      & \|\mathbf{x}^k - \mathbf{x}^*\|_2^2 \nonumber \\
    \leq ~& \|\mathbf{x}^{k-1} -\epsilon\mathcal{T}(\mathbf{x}^{k-1}) - \mathbf{x}^* + \epsilon \mathcal{T}(\mathbf{x}^*) \|_2^2 \nonumber \\
    = ~& \|\mathbf{x}^{k-1} - \mathbf{x}^* \|_2^2 + \|\epsilon \mathcal{T}(\mathbf{x}^{k-1}) - \epsilon \mathcal{T}(\mathbf{x}^*)\|_2^2 \nonumber \\
    & ~~~~~~~ -2\epsilon (\mathcal{T}(\mathbf{x}^{K-1}) - \mathcal{T}(\mathbf{x}^*))^\intercal (\mathbf{x}^{k-1} - \mathbf{x}^*) \nonumber \\
    \leq ~&\left(\epsilon^2 L^2 - 2\epsilon M + 1 \right) \|\mathbf{x}^{k-1} - \mathbf{x}^* \|,
\end{align}
The non-expensiveness of $\mathcal{T}$ leads to the first inequality and the strongly monotone and Lipschitz continuous properties of $\mathcal{T}$ \eqref{eq:property_T_1}-\eqref{eq:property_T_2} leads to the last inequality. As long as there exist some step size $\epsilon >0$, which let the coefficient $0 < (\epsilon^2 L^2 - 2\epsilon M + 1) < 1$, such that $\epsilon < \frac{2M}{L^2}$, the operator $\mathcal{T}$ converge the sequence $\mathbf{x}^k$ exponentially to the unique saddle point of \eqref{eq:maxmin_L}. This concludes the proof.
\end{proof}

\subsection{Voltage Regulation Problem Formulation}

The general OPF problem \eqref{eq:opt} can be boiled down to a 
voltage magnitude regulation problem, where we specify the electrical quantities vector $\mathbf{r}$ as the voltage magnitude vector denote as $|\mathbf{v}|:=[|v_1|,\ldots,|v_N|]^\intercal$
\begin{subequations}\label{eq:opt_v}
\begin{eqnarray}
\textbf{OPF-V:} & \underset{\mathbf{p},\mathbf{q},|\mathbf{v}|}{\min} & \sum_{i\in\mathcal{N}}C_i(p_i,q_i)+ C_0(\mathbf{p},\mathbf{q}),\\
& \text{s.t.}&|\mathbf{v}|=\mathbf{A}\mathbf{p}+\mathbf{B}\mathbf{q}+ |\mathbf{v}_0|,\\
&&\underline{\mathbf{v}} \leq  |\mathbf{v}| \leq \bar{\mathbf{v}},\\
&& (p_i,q_i)\in\mathcal{Z}_i,\forall i\in\mathcal{N}. 
\end{eqnarray}
\end{subequations}
The inequality constraints capture the lower and upper bounds $(\underline{\mathbf{v}},\bar{\mathbf{v}})$ of voltage magnitudes. In particular, linear approximation for voltage magnitude with AC power flow, as a function of power injection $(\mathbf{p},\mathbf{q})$. The coefficient matrics $(\mathbf{A},\mathbf{B})$ of the linearized voltages and normalized vector $|\mathbf{v}_0|$ can be attained from numerous linearization methods, e.g., \cite{guggilam2016scalable, bernstein2017linear,gan2016online}. The gradient-based OPF controller \eqref{eq:gradient_SE_OPF_Nonlinear} utilizes the online voltage magnitude measurement and voltage estimation to converge the system.

\subsection{Voltage Magnitude Estimation}

We consider the following specific WLS problem for voltage magnitude estimation in distribution networks with the assumption that measurement noises for $p_i$, $q_i$ and $|v_i|$ follows independent Gaussian distributions:
% \begin{subequations}\label{eq:se}\nonumber
\begin{eqnarray}
& \hspace{-2mm}\textbf{SE-V:} \nonumber  \\ 
& \hspace{-6mm}\underset{\hat{\mathbf{p}},\hat{\mathbf{q}},|\mathbf{\hat{v}}|}{\min}&\hspace{-6mm} \sum_{i\in\mathcal{M}_p}\frac{\left(\hat{p}_i-\tilde{p}_i\right)^2 }{2\sigma^2_{i,p}}+\hspace{-2mm}\sum_{i\in\mathcal{M}_q}\frac{\left(\hat{q}_i-\tilde{q}_i \right)^2}{2\sigma^2_{i,q}}+\hspace{-2mm}\sum_{i\in\mathcal{M}_v}\frac{\left(|\hat{v}_i|-|\tilde{v}_i|\right)^2}{2\sigma^2_{i,v}}, \nonumber\\
& \textrm{s.t.} & |\hat{\mathbf{v}}|=\mathbf{A}\hat{\mathbf{p}}+\mathbf{B}\hat{\mathbf{q}}+ |\mathbf{v}_0|,
\end{eqnarray}
% \end{subequations}
where $\tilde{p}_i$, $\tilde{q}_i$ and $|\tilde{v}_i|$ represent the active, reactive and voltage magnitude measurement and vectors $\mathbf{\hat{p}}:=[\hat{p}_1,\dots,\hat{p}_N]^\intercal$, $\mathbf{\hat{q}}:=[\hat{q}_1,\dots,\hat{q}_N]^\intercal$ and $|\mathbf{\hat{v}}|:=[|\hat{v}_1|,\dots,|\hat{v}_N|]^\intercal$ collect estimation results. All the (pseudo) measurable sets for active power, reactive power and voltage magnitude $\mathcal{M}_p$, $\mathcal{M}_q$ and $\mathcal{M}_v$ are the subset of nodes $\mathcal{M}_p$, $\mathcal{M}_q$ and $\mathcal{M}_v \subseteq \mathcal{N}$. The summation of WLS estimators is normalized by their standard deviations $\sigma_{p_i}$, $\sigma_{q_i}$ and $\sigma_{v_i}$ of measurement errors with respect to $\tilde{p}_i$, $\tilde{q}_i$ and $|\tilde{v}_i|$. To ensure the full observation, we have $\mathcal{M}_p = \mathcal{M}_q = \mathcal{N}$ by using historic data as pseudo measurement with large errors.

\bibliographystyle{ieeetr}  
\bibliography{refs} 

\end{document}